 \newtheorem{thm}{Theorem}[section]
 \newtheorem{cor}[thm]{Corollary}
 \newtheorem{lem}[thm]{Lemma}
 \newtheorem{prop}[thm]{Proposition}
 \theoremstyle{definition}
 \theoremstyle{remark}
 \numberwithin{equation}{section}
\begin{document}

%
%
%
%
%
%
%
%
%

\title[ Biharmonic CMC Hypersurfaces]
 { Biharmonic CMC Hypersurfaces in the Warped Product Manifolds}

\author[N. Mosadegh]{N. Mosadegh}

\address{Department of Mathematics,
 Azarbaijan Shahid Madani University,\\
Tabriz 53751 71379, Iran}

\email{n.mosadegh@azaruniv.ac.ir}

\author{E. Abedi}
\address{Department of Mathematics,
 Azarbaijan Shahid Madani University,\\
Tabriz 53751 71379, Iran}
\email{esabedi@azaruniv.ac.ir}
\subjclass{Primary 53C42; Secondary 53C43, 53B25}

\keywords{Biharmonic hypersurfaces, Warped product manifolds.}

\date{January 1, 2004}
\dedicatory{}

\begin{abstract}
We find the necessary and sufficient condition for the proper biharmonic CMC
hypersurfaces in the special warped product space. Furthermore, we obtain that there exists no
proper biharmonic CMC compact hypersurface there.
\end{abstract}
\footnotetext[1]{The second author is as corresponding author.}
\maketitle
\section{Introduction}

A harmonic map $f: M \rightarrow N $ between two Riemannian manifolds, where $M$ is compact, is considered as the critical points of the energy functional
\begin{eqnarray*}
&&E: C^{\infty}(M,N)\rightarrow R\nonumber\\
&& E(f)=\frac{1}{2}\int_M |df|^2d\upsilon,
 \end{eqnarray*}
where $d\upsilon$ is the volume form of $M$. By taking the similar idea, the notion of $k$-harmonic maps was introduced by Eells and Lemair \cite{J_2}, where the problem was proposed to investigate $k$-harmonic maps as the critical points of the $k$-energy functional see \cite{J}. In case $k=2$, the bienergy of $f$ was defined by
\begin{eqnarray*}
 E_2(f)=\frac{1}{2}\int_M |\tau(f)|^2 d\upsilon,
\end{eqnarray*}
where $\tau(f)= \textsf{trace}\nabla df$ is known the tension field of $f$.  In \cite{G,G_2}, the first variation formula of the bienergy and the definition of $2$-harmonic maps were derived in point of the variational formulas view, in which 
\begin{eqnarray}\label{5.36}
\tau_2(f)=-\Delta \tau(f)- \textsf{trace} R^N(df(.), \tau(f))df(.)=0,
\end{eqnarray}
where the equation $\tau_2(f)=0$ is named the biharmonic equation.

Independently, the biharmonic submanifolds in the Euclidean space with the harmonic mean curvature vector was established by B. Y. Chen. Indeed, the following well known conjecture was posted in \cite{B}: any biharmonic submanifold in Euclidean space is harmonic. Actually, Chen's notion can be recovered according to the property of the biharmonic maps for the Riemannian immersions into the Euclidean spaces. Later on, in \cite{Dim} the author found every biharmonic submanifold with the constant mean curvature is minimal in the Euclidean space. Obviously, any harmonic map is biharmonic, interesting is in the non harmonic biharmonic maps which are named proper biharmonic. In this sense, the biharmonic hypersurfaces, those are the special case of biharmonic submanifolds, have been extensively studied in the last decade. The proper biharmonic hypersurfaces were considered with at most three distinct principal curvatures in $4$-dimensional space form and taken a classification of them in \cite{Bol}. Also, some results concerning the biharmonic hypersurfaces are obtained by the investigating on the number of distinct principal curvatures. The  biharmonic hypersurfaces with three distinct principal curvatures in the Euclidean space are minimal, see \cite{YU1,YU3}. With regarding this idea, in \cite{Abedi2} the authors obtained some non-existence result of the biharmonic Ricci Soliton hypersurfaces in the Euclidean spaces too.  

Additionally, in spaces of the non-constant sectional curvature there exist several classification results concerning the proper biharmonic hypersurfaces, which have been investigated in works for instance see \cite{Oni,JI,R1,R2}. For example, all the proper biharmonic Hopf cylinders in the $3-$dimensional Sasakian space forms were classified. Morevere, all the proper-biharmonic Hopf cylinders over a homogeneous real hypersurfaces in the complex space form spaces were determined. Also, in \cite{Abedi3} the authors deal with the biharmonic pseudo Hopf hypersurfaces in the special Sasakian space form $S^{2m+1}$ and obtained some existence and non existence results about them, where the grad$H$ has significant role in the tangent bundle. Now, in this note we consider the biharmonic hypersurfaces, which are influenced of the conformal change of the metric in the warped products. The concept of a new product manifold was established by Bishop and O'Neill \cite{O}, what is called warped product. Actually, new Riemannian manifolds were illustrated by the warped metric. The authors in \cite{L} showed that, the generalized Robertson-Walker space-time and the standard static space-time are two well known warped product manifolds. So, due to the  application of the warped manifold, they have significant role either in the differential geometry or the theoretical physic.

The present article is included the result of studying about the biharmonic hypersurfaces in the almost contact metric manifold which is written as a warped product manifold $M_1 \times_f M_2$, where $M_1$ and $M_2$ are the Sasakian space form and complex space form, respectively. In section $3$ the necessary and sufficient condition of the biharmonic hypersurface $M^{2(m+n)}$ is determined. Indeed, the biharmonic condition is specialized into two cases, where the unit normal vector field $N \in T^\perp(M^{2(m+n)})$ is either in $\pounds (M_1)$ or $\pounds (M_2)$ which are the horizontal and the vertical lifts, respectively. Also, the proper biharmonic hypersurfaces with the constant mean curvature(CMC) in $M_1\times_f M_2$ is determined. Furthermore, it is shown that if the warped map $f$ satisfies an inequality $|\nabla f|^2 \leq \frac{2n-1}{2n}f^2 \mathsf{Area}(M^{2(m+n)})$, then the proper  biharmonic oriented compact CMC hypersurface does not exist.

 \section{preliminaries}
 In this section, we recall some fundamental definitions and illustrate the geometry of the ambient manifold in order to reach the theorem and the results about the biharmonic hypersurfaces $M^{2(m+n)}$ into the warped product manifold $M_1\times_f M_2$, where $M_1$ and $M_2$ are the Sasakian space form and complex space form, respectively.

Let $M_1$ and $M_2$ be two Riemannian manifolds, which are equipped with the Riemannian metrics $g_1$ and $g_2$, respectively and let $f$ be a positive smooth function on $M_1$. Now, the product manifold $M_1\times_f M_2$ with the Riemannian metric such that
\begin{eqnarray}
g_f= \pi^\ast g_1 +f^2 \sigma^\ast  g_2,
\end{eqnarray}
 is called a warped product manifold $M_1\times_f M_2$, where $\pi: M_1\times M_2 \longrightarrow M_1$ and $\sigma: M_1\times M_2\longrightarrow M_2$ are the natural projection. Also, $f$ is named a warping map of the warped manifold.

 The tangent vectors in $M_1\times \{q\}=\sigma^{-1}(q)$ and $\{p\}\times M_2= \pi^{-1}(p)$ are called horizontal and vertical vectors, respectively; where $q\in M_2$ and $p\in M_1$.
For a vector field $X \in \chi(M_1)$, the horizontal lift of $X$ to $M_1\times_f M_2$ is a vector field $\overline{X}$ such that $\pi_\ast(\overline{X})=X$. The set of all the horizontal and the vertical lifts are denoted by $\pounds(M_1)$ and $\pounds(M_2)$, respectively.

The following Lemma shows, how the Levi-Civita connection $\nabla^f$ of a warped product manifold $M_1\times_f M_2$ is related to the Levi-Civita connections $\nabla^1$ and $\nabla^2$ of $M_1$ and $M_2$, respectively.
\begin{prop}\label{5.21}
\cite{B2} For $X,Y\in \pounds(M_1)$ and $V,W \in \pounds(M_2)$, we have on $M_1\times_f M_2$ that
 \begin{itemize}
   \item $\nabla^f_X Y \in \pounds(M_1)$ is the lift of $\nabla^1_X Y$ on $M_1$,
   \item $\nabla^f_X V = \nabla^f_V X = (X ln f)V$,
   \item nor$(\nabla^f_V W) = h(V,W) = \frac{-<V,W>}{f}\nabla f$,
   \item tan$(\nabla^f_V W)\in \pounds(M_2)$ is the lift of $\nabla^{2}_V W$ on $M_2$, where $\nabla^{2}$ is the Levi-Civita connection of $M_2$.
 \end{itemize}
\end{prop}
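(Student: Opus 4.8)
The statement is the classical warped-product connection formula of Bishop--O'Neill, so the plan is to recover it from the Koszul formula for the Levi-Civita connection $\nabla^{f}$ of $g_{f}$,
\begin{eqnarray*}
2g_{f}(\nabla^{f}_{A}B,C) &=& A\,g_{f}(B,C)+B\,g_{f}(A,C)-C\,g_{f}(A,B)\\
&& +\,g_{f}([A,B],C)-g_{f}([A,C],B)-g_{f}([B,C],A),
\end{eqnarray*}
evaluated on horizontal and vertical test fields $C$ in turn. First I would record the bracket facts for lifts: if $X,Y\in\pounds(M_{1})$ then $[X,Y]$ is the horizontal lift of the bracket on $M_{1}$; if $V,W\in\pounds(M_{2})$ then $[V,W]$ is the vertical lift of the bracket on $M_{2}$; and $[X,V]=0$ for $X\in\pounds(M_{1})$, $V\in\pounds(M_{2})$. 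I would also use that a function pulled back from $M_{1}$ (in particular $f$) is annihilated by every vertical field and a function pulled back from $M_{2}$ by every horizontal field; hence $g_{f}(X,Y)=g_{1}(X,Y)$ lives on $M_{1}$, $g_{f}(V,W)=f^{2}g_{2}(V,W)$, so that $X\,g_{f}(V,W)=2f(Xf)g_{2}(V,W)$ while $V\,g_{f}(X,Y)=0$. Finally, $\nabla f$ (the $g_{f}$-gradient of $f$) is horizontal and $g_{f}(\nabla f,X)=Xf$, since $df$ kills vertical vectors.

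With these in hand each of the four cases is a short bookkeeping computation. For $\nabla^{f}_{X}Y$: testing against a vertical $W$, every right-hand term vanishes by orthogonality or because $[X,Y]$ is horizontal, so $\nabla^{f}_{X}Y$ is horizontal; testing against a horizontal $Z$ reproduces the Koszul formula of $\nabla^{1}$, giving $\nabla^{f}_{X}Y=(\nabla^{1}_{X}Y)^{\mathrm{lift}}$. For $\nabla^{f}_{X}V$: against a horizontal $Y$ all six terms vanish (using $[X,V]=0$, $V\,g_{f}(X,Y)=0$ and orthogonality), so $\nabla^{f}_{X}V$ is vertical; against a vertical $W$ only $X\,g_{f}(V,W)$ survives, and $X\,g_{f}(V,W)=2(X\ln f)g_{f}(V,W)$, so $\nabla^{f}_{X}V=(X\ln f)V$; torsion-freeness together with $[X,V]=0$ then gives $\nabla^{f}_{V}X=\nabla^{f}_{X}V$. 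For $\nabla^{f}_{V}W$ I would split off the horizontal (``nor'') part: pairing with a horizontal $X$, Koszul collapses to $2g_{f}(\nabla^{f}_{V}W,X)=-X\,g_{f}(V,W)=-2f(Xf)g_{2}(V,W)$, i.e. $g_{f}(\nabla^{f}_{V}W,X)=-\frac{g_{f}(V,W)}{f}(Xf)=g_{f}\!\big(-\frac{\langle V,W\rangle}{f}\nabla f,\,X\big)$, which is exactly $h(V,W)$; pairing instead with a vertical $U$ and using $Vf=0$, every term factors as $f^{2}$ times the corresponding $g_{2}$-quantity with all brackets vertical, so the identity is $f^{2}$ times the Koszul formula of $\nabla^{2}$, whence $\tan(\nabla^{f}_{V}W)=(\nabla^{2}_{V}W)^{\mathrm{lift}}$.

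The only substantive points are the three bracket identities for lifts and the fact that $\nabla f$ is horizontal with $g_{f}(\nabla f,X)=Xf$; once these are established the rest is purely mechanical, so I expect the main difficulty to be notational --- keeping the horizontal/vertical orthogonality relations and the $f^{2}$ scalings straight across all the Koszul terms --- rather than conceptual.
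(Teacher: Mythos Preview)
Your Koszul-formula argument is correct and is the standard derivation of the Bishop--O'Neill warped-product connection formulas. Note, however, that the paper does not supply its own proof of this proposition: it is quoted as a known result from Chen's monograph (the cited reference), so there is nothing in the paper to compare your argument against; the approach you outline is essentially the textbook one.
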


An odd dimensional manifold $M_1$ is equipped with the tensor fields $\varphi$, $\xi$ and $\eta$ of types $(1, 1)$, $(0, 1)$ and $(1, 0)$, respectively is called an almost contact manifold, where the following conditions satisfy
\begin{eqnarray*}
\varphi^2(X)=-X+ \eta(X)\xi,\ \ \
\eta(\xi)=1,\ \ \  \eta(\varphi X)=0,
\end{eqnarray*}
for $X \in T(M_1)$. The triple $(\varphi, \xi, \eta)$ is named an almost contact structure too. Now, $M_1$ is endowed a Riemannian metric $g_1$, so $( M_1,\varphi, \xi, \eta, g_1)$ is called an almost contact metric manifold provided that
\begin{eqnarray}
\eta(X)=g_1(\xi, X),\ \ \ 
 g_1(\varphi X, \varphi Y)= g_1(X, Y)- \eta(X)\eta(Y),
\end{eqnarray}
where $X$ and $Y$ in $T(M_1)$. Also, the almost contact metric manifold is called a contact metric manifold, where $g_1(X, \varphi Y)=d\eta(X, Y)$. A contact metric manifold $(M_1, \varphi, \xi, \eta, g_1)$ is normal if $N_\varphi + d\eta \otimes \xi=0$, where
\begin{eqnarray}
N_\varphi (X, Y)=[\varphi X, \varphi Y]-\varphi[\varphi X, Y]-\varphi[X, \varphi Y]+ \varphi^2[X, Y],
\end{eqnarray}
 is the Nijenhuis tensor field of $\varphi$, in this case $M_1$ is called a Sasakian manifold. In other words, the necessary and sufficient condition for a contact metric manifold $(M_1, \varphi, \xi, \eta, g_1)$ to be a Sasakian manifold, written as:
\begin{eqnarray}
 (\nabla^1_X \varphi)Y=g_1(X,Y)\xi- \eta(Y)X, \ \ \
\end{eqnarray}
where $\nabla^1$ is the Levi-Civita connection on $M_1$.

For a Sasakian manifold $(M_1, \varphi, \xi, \eta, g_1)$ the sectional curvature of $2$-plane spanned by $\{X, \varphi X\}$ is called $\varphi$-sectional curvature, where $X$ is orthogonal to $\xi$. A Sasakian manifold, which has the constant $\varphi$-sectional curvature $c$ is called a Sasakian space form and determined by $M_1(c)$. Then its Riemannian tensor field satisfies
\begin{eqnarray}
R^1(X, Y)Z &=& -\frac{c-1}{4}\{\eta(Z)[\eta(Y)X - \eta(X)Y] \nonumber \\
& & + [g_1(Y, Z)\eta(X)- g_1(X, Z)\eta(Y)]\xi\nonumber \\
& & + g_1( \varphi X, Z)\varphi Y + 2g_1(\varphi X, Y)\varphi Z - g_1(\varphi Y,Z)\varphi X\}  \\
& & + \frac{c+3}{4}\{ g_1(Y, Z)X - g_1(X, Z)Y\} \nonumber.
\end{eqnarray}
Also, we take the complex Kahler manifold $M_2$ into account with the known curvature tensor as the following
\begin{eqnarray}
R^2(X, Y)Z &=& k\{g_2(Y, Z)X - g_2(X, Z)Y \nonumber\\
& & + g_2(JY , Z)JX - g_2(JX, Z)JY- 2g_2(JX, Y)JZ\},
\end{eqnarray}
for $X, Y$ and $Z\in T(M_2)$, where $g_2$ is the Riemannian metric on $M_2$.

Now, we make clear the geometry of the ambient warped product manifold $M_1 \times_f M_2$. At first, we show that $M_1\times_f M_2$ has an almost contact metric structure. We give a $(1,1)$-tensor field $\overline{\varphi}$, a vector field $\overline{\xi}$  and a $1$-form $\overline{\eta}$ on $M_1\times_f M_2$, for each $X, Y \in T(M_1\times_f M_2)$, in which
\begin{eqnarray}\label{5.20}
   \overline{\xi}=(\xi,0), \ \ \      \overline{\eta}(X)=g_f(X,\overline{\xi}), \ \ \
\overline{\varphi}X= (\varphi \pi_* X, J\sigma_* X),
\end{eqnarray}
then we have
\begin{eqnarray}
 \overline{\varphi}^2 X= -X+ \overline{\eta}(X)\overline{\xi}   ,\ \ \ \      \overline{\varphi}(\overline{\xi})=0.
\end{eqnarray}
Also, the warped product manifold $M_1\times_f M_2$ admits a contact metric, written as
\begin{eqnarray}\label{5.22}
g_f(\overline{\varphi} X,\overline{\varphi} Y)=g_f (X,Y)-\overline{\eta}(X)\overline{\eta}(Y),
\end{eqnarray}
so $(M_1\times_f M_2, \overline{\varphi}, \overline{\xi}, \overline{\eta}, g_f)$ is an almost contact metric manifold.

Furthermore, we prove that the ambient manifold $M_1\times_f M_2$ is not a Sasakian manifold.
\begin{lem}
For a warped product $M_1\times_f M_2$ we have
\begin{eqnarray}
(\nabla^{f}_X \overline{\varphi})Y &=& \nabla^{f}_X \overline{\varphi}Y- \overline{\varphi}\nabla^{f}_X Y\nonumber\\
 &=& g_1(\pi_\ast X, \pi_\ast Y)\xi- \overline{\eta}(\pi_\ast Y)\pi_\ast X\nonumber\\
 && + (\varphi(\pi_\ast Y)lnf)\sigma_\ast X \nonumber - (\pi_\ast Y lnf)J\sigma_\ast X\\
 && -\frac{g_f(\sigma_\ast X, J\sigma_\ast Y)}{f}\nabla f + \frac{g_f(\sigma_\ast X, \sigma_\ast Y)}{f}\varphi \nabla f,\nonumber
\end{eqnarray}
where $\nabla^f$ stands for the Levi-Civita connection on manifold $M_1\times_f M_2$ for $X, Y \in T(M_1\times_f M_2)$.
\end{lem}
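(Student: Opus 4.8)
The plan is to use bilinearity to reduce to four elementary cases. Both sides of the asserted identity are $C^\infty(M_1\times_f M_2)$-bilinear in $X$ and $Y$: the left side because $\nabla^f_X\overline\varphi Y-\overline\varphi\nabla^f_X Y$ is tensorial (the non-tensorial Leibniz terms cancel), and the right side because it is built from the linear maps $\pi_\ast,\sigma_\ast,\varphi,J$ together with pointwise directional derivatives of $\ln f$. Since every tangent vector field on the product splits as a horizontal lift plus a vertical lift, it suffices to verify the formula when each of $X,Y$ lies in $\pounds(M_1)$ or in $\pounds(M_2)$. The tools I would use are Proposition~\ref{5.21} for the connection $\nabla^f$, the Sasakian identity $(\nabla^1_X\varphi)Y=g_1(X,Y)\xi-\eta(Y)X$ on $M_1$, the K\"ahler identity $\nabla^2J=0$ on $M_2$, the definition~\eqref{5.20} of $\overline\varphi$, and the remark that $f\in C^\infty(M_1)$ makes $\nabla f$ horizontal, whence $\overline\varphi(\nabla f)=\overline{\varphi\nabla f}$ is again horizontal.

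Next I would run through the cases. If $X,Y\in\pounds(M_1)$, the first item of Proposition~\ref{5.21} identifies $\nabla^f_X\overline\varphi Y$ and $\overline\varphi\nabla^f_X Y$ with the lifts of $\nabla^1_X\varphi Y$ and $\varphi\nabla^1_X Y$, so their difference is the lift of $(\nabla^1_X\varphi)Y=g_1(\pi_\ast X,\pi_\ast Y)\xi-\overline\eta(\pi_\ast Y)\pi_\ast X$, the first line of the claim, and all other right-hand terms vanish because $\sigma_\ast X=\sigma_\ast Y=0$. If $X\in\pounds(M_1)$ and $Y\in\pounds(M_2)$, the second item gives $\nabla^f_X\overline\varphi Y=(X\ln f)\overline\varphi Y=\overline\varphi\big((X\ln f)Y\big)=\overline\varphi\nabla^f_X Y$, so the left side is $0$, consistent with the right side (each of whose terms contains $\pi_\ast Y$ or $\sigma_\ast X$, both zero). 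If $X\in\pounds(M_2)$ and $Y\in\pounds(M_1)$, then $\overline\varphi Y$ is horizontal and $X$ vertical, so the second item gives $\nabla^f_X\overline\varphi Y=\big(\varphi(\pi_\ast Y)\ln f\big)X$ while $\overline\varphi\nabla^f_X Y=\overline\varphi\big((\pi_\ast Y\ln f)X\big)=(\pi_\ast Y\ln f)\,\overline\varphi X$; subtracting yields exactly the two middle terms $(\varphi(\pi_\ast Y)\ln f)\sigma_\ast X-(\pi_\ast Y\ln f)J\sigma_\ast X$, and the other terms vanish since $\pi_\ast X=\sigma_\ast Y=0$.

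The case $X,Y\in\pounds(M_2)$ carries the real content. Here items three and four of Proposition~\ref{5.21} write $\nabla^f_X Y$ as (the vertical lift of) $\nabla^2_{\sigma_\ast X}\sigma_\ast Y$ minus $\tfrac{g_f(\sigma_\ast X,\sigma_\ast Y)}{f}\nabla f$, and, applied to $\overline\varphi Y$ whose $M_2$-component is $J\sigma_\ast Y$, write $\nabla^f_X\overline\varphi Y$ as $\nabla^2_{\sigma_\ast X}(J\sigma_\ast Y)$ minus $\tfrac{g_f(\sigma_\ast X,J\sigma_\ast Y)}{f}\nabla f$. Invoking $\nabla^2J=0$ turns $\nabla^2_{\sigma_\ast X}(J\sigma_\ast Y)$ into $J\nabla^2_{\sigma_\ast X}\sigma_\ast Y$, i.e. into $\overline\varphi$ applied to the vertical lift of $\nabla^2_{\sigma_\ast X}\sigma_\ast Y$, which cancels the tangential part of $\overline\varphi\nabla^f_X Y$; meanwhile $\overline\varphi$ sends the horizontal term $-\tfrac{g_f(\sigma_\ast X,\sigma_\ast Y)}{f}\nabla f$ to $-\tfrac{g_f(\sigma_\ast X,\sigma_\ast Y)}{f}\varphi\nabla f$. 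What remains is $-\tfrac{g_f(\sigma_\ast X,J\sigma_\ast Y)}{f}\nabla f+\tfrac{g_f(\sigma_\ast X,\sigma_\ast Y)}{f}\varphi\nabla f$, the last line of the claim, the $M_1$-terms dropping out since $\pi_\ast X=\pi_\ast Y=0$. Adding the four cases by bilinearity gives the formula for arbitrary $X,Y$. I expect the only real difficulty to be organizational, and concentrated in this last case: one has to keep the fiber-tangent and fiber-normal components of $\nabla^f_VW$ strictly apart, recall that $\overline\varphi$ acts by $\varphi$ on horizontal vectors (so preserves the horizontality of $\nabla f$) and by $J$ on vertical ones, and apply the parallelism of $J$ at precisely the step where the $\nabla^2$-terms must cancel.
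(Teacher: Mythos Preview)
Your argument is correct and follows exactly the route the paper indicates: the paper's own proof merely says to apply equations~(\ref{5.20}), (\ref{5.22}) and Proposition~\ref{5.21}, and your case-by-case computation using the horizontal/vertical decomposition, the Sasakian identity $(\nabla^1_X\varphi)Y=g_1(X,Y)\xi-\eta(Y)X$, and the K\"ahler condition $\nabla^2J=0$ is precisely what carrying this out in detail amounts to.
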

\begin{proof}
For $X, Y$, which are tangent on $M_1\times_f M_2$, by applying the equations $(\ref{5.20})$, $(\ref{5.22})$ and taking into account the Proposition $\ref{5.21}$ we get the result.
\end{proof}
So, however the product manifold has an almost contact metric structure, it is not a Sasakian manifold. Similarly, the equations $(\ref{5.20})$, $(\ref{5.22})$ and the Proposition $\ref{5.21}$ yield
\begin{eqnarray}
\nabla^f_X \overline{\xi}= -\varphi \pi_\ast X + (\xi lnf)\sigma_\ast X.\nonumber
\end{eqnarray}

\section{Biharmonic hypersurface in warped product manifold}

 In this section, we are going to determine a biharmonic hypersurface $M^{2(m+n)}$ in the special warped product $M_1\times_f M_2$, where $M_1$ and $M_2$ are the Sasakian space form and complex space form, respectively. Actually, we specialize the biharmonic concept into the two cases, where the normal vector field $N \in T^\perp(M^{2m+2n})$ is either in $\pounds (M_1)$ or $\pounds (M_2)$. Also, we suppose that $\xi\in T(M_1)$ is tangent to $M^{2(m+n)}$. Then, we get the characterization of the second fundamental form and the scaler curvature of the biharmonic hypersurfaces there.
\begin{thm}\label{5.25}
Let $\psi: (M^{2(m+n)},g)\longrightarrow (M_1\times_f M_2,g_f)$ be an isometric immersion of a hypersurface $M^{2(m+n)}$ in the warped product space $M_1\times_f M_2$. Let $ H \in \pounds(M_1)$, then $M^{2(m+n)}$ is a biharmonic hypersurface if and only if
\begin{eqnarray}
&&\left\{
     \begin{array}{ll}
        \hbox{$\Delta^\perp H = \textsf{trace} B((.), A_H (.))-\frac{c(m+1)+3m -1}{2}H

           +\frac{2n}{f}(\nabla_H\nabla f)^\perp$ }\\
           \hbox{$- 2\textsf{trace} A_{\nabla^\perp_{(.)} H}(.) -(m+n)\textsf{grad}|H|^2-\frac{4n}{f}(\nabla_H\nabla f)^\top=0,$}
         \end{array}
  \right.
\end{eqnarray}
where $A, B$ and $H$ denote the shape operator, the second fundamental form and the mean curvature vector of $M^{2(m+n)}$ into the warped space $M_1\times_f M_2$, respectively.
\end{thm}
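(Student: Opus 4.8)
The plan is to compute the bitension field $\tau_2(\psi)$ directly from the biharmonic equation \eqref{5.36}, using the fact that for an isometric immersion the tension field is $\tau(\psi)=(2m+2n)H=2(m+n)H$. So I would begin by writing $\tau_2(\psi)=-\Delta^\psi\tau(\psi)-\textsf{trace}\,R^{M_1\times_f M_2}(d\psi(\cdot),\tau(\psi))d\psi(\cdot)$ and split this into its tangential and normal components relative to $M^{2(m+n)}$; the biharmonic condition is then equivalent to the vanishing of both components. The rough (Laplace–type) term $-\Delta^\psi(2(m+n)H)$ I would expand via the Weitzenb\"ock-type decomposition that is standard for hypersurfaces, giving pieces involving $\Delta^\perp H$, $\textsf{trace}\,B(\cdot,A_H(\cdot))$, $\textsf{trace}\,A_{\nabla^\perp_{(\cdot)}H}(\cdot)$, and $\textsf{grad}|H|^2$; these are exactly the ``universal'' terms appearing in the statement, so this part is bookkeeping with the Gauss and Weingarten formulas.

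The genuinely new input is the curvature term $\textsf{trace}\,R^{M_1\times_f M_2}(d\psi(\cdot),H)d\psi(\cdot)$, and here I would use the hypothesis $H\in\pounds(M_1)$ decisively: since the mean curvature vector is horizontal, Proposition \ref{5.21} tells us how the warped connection acts, and we may feed a horizontal $H$ together with a local orthonormal frame $\{e_i\}$ of $M^{2(m+n)}$ (splitting each $e_i$ into its horizontal and vertical parts) into the curvature tensor of the warped product. I would assemble $R^{M_1\times_f M_2}$ from the O'Neill/Bishop–O'Neill curvature formulas for a warped product: the horizontal–horizontal part contributes the Sasakian-space-form curvature $R^1$ of $M_1(c)$, which after tracing against $H$ (and using that $\xi$ is tangent to $M$, so $\overline\eta(H)=0$ and the $\varphi$-terms collapse appropriately) produces the constant-coefficient term $-\frac{c(m+1)+3m-1}{2}H$; the mixed horizontal–vertical parts, which involve the Hessian of $\ln f$ and hence $\nabla f$, produce the terms $\frac{2n}{f}(\nabla_H\nabla f)^\perp$ and $-\frac{4n}{f}(\nabla_H\nabla f)^\top$; the factor $2n$ is the dimension of the vertical factor $M_2^{2n}$, which is why it appears as a multiplicity in the trace over the frame.

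Concretely the steps are: (i) recall $\tau(\psi)=2(m+n)H$ and state \eqref{5.36}; (ii) expand $-\Delta^\psi H$ into normal Laplacian plus shape-operator terms, separating tangential and normal parts; (iii) compute $R^{M_1\times_f M_2}$ via the warped-product curvature identities, substitute the explicit $R^1$ for the Sasakian space form $M_1(c)$ given in the preliminaries, and use $\xi$ tangent to $M$, $H$ horizontal, and $\overline\varphi H$ decomposition to simplify; (iv) identify the Hessian-of-$\ln f$ pieces with $\frac{1}{f}\nabla_H\nabla f$ and split into tangential/normal parts, tracking the multiplicity $2n$; (v) collect everything, set the normal component to zero (the displayed PDE) and check the tangential component is automatically satisfied or reduces to the same relation under the CMC-type reductions implicit later. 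The main obstacle will be step (iii)–(iv): correctly computing the trace of the warped-product curvature tensor over a mixed frame, keeping careful track of which terms survive when $H$ is horizontal and $\xi$ is tangent, and getting the coefficient $\frac{c(m+1)+3m-1}{2}$ and the two distinct coefficients $2n$ and $-4n$ exactly right — this is where sign conventions for $R$, the normalization $\tau(\psi)=(\dim M)H$, and the Bishop–O'Neill formulas must all be made mutually consistent.
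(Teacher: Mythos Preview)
Your overall strategy matches the paper's: expand $\tau_2(\psi)$ via Gauss--Weingarten to isolate $\Delta^\perp H$, $\textsf{trace}\,B(\cdot,A_H\cdot)$, $\textsf{trace}\,A_{\nabla^\perp H}$, $\textsf{grad}|H|^2$, then compute $\textsf{trace}\,R^f(d\psi(\cdot),H)d\psi(\cdot)$ from the warped-product curvature identities together with the Sasakian curvature tensor of $M_1(c)$, using a frame split into horizontal and vertical parts. That is exactly what the paper does.

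There is, however, one genuine gap in your plan. You attribute the asymmetry between the coefficients $+\tfrac{2n}{f}$ (normal part) and $-\tfrac{4n}{f}$ (tangential part) to the warped-curvature computation itself. That is not where the doubling comes from. The curvature trace gives a single term $\tfrac{2n}{f}\nabla_H\nabla f$, whose tangential and normal components both carry the factor $2n$. The reason the tangential piece ends up with $4n$ is that the tangential component of $\textsf{trace}\,R^f(\cdot,H)\cdot$ enters the biharmonic equation \emph{twice}: once directly from the $-\textsf{trace}\,R^f$ term in $\tau_2(\psi)$, and once more through the expansion of $\textsf{trace}\,\nabla^M_{(\cdot)}A_H(\cdot)$. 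Indeed, the paper computes (via a Codazzi-type manipulation using $R^f(e_\alpha,e_\beta)e_\alpha$) that
\[
\textsf{trace}\,\nabla^M_{(\cdot)}A_H(\cdot)=(\textsf{trace}\,R^f(\cdot,H)\cdot)^\top+(m+n)\,\textsf{grad}|H|^2+\textsf{trace}\,A_{\nabla^\perp_{(\cdot)}H}(\cdot),
\]
and it is this hidden copy of the tangential curvature that produces $-\tfrac{4n}{f}(\nabla_H\nabla f)^\top$ after you add it to the explicit one. If in your step (ii) you only write down the ``usual'' shape-operator pieces without deriving this identity, you will get $-\tfrac{2n}{f}$ in the tangential equation and miss the theorem by a factor of two.

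Relatedly, your step (v) is off: the tangential equation is not ``automatically satisfied'' nor a consequence of any CMC reduction; it is the second, independent equation in the displayed system, and the theorem asserts the vanishing of both.
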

\begin{proof}
 Let the mean curvature vector $H= |H|N \in \pounds(M_1)$, where $N$ is a local unit normal vector field of $M^{2(m+n)}$. We put $V=-\overline{\varphi}N$, which is tangent to $M^{2(m+n)}$. Now, we consider a local parallel frame field $\{e_\alpha\}$ at $p\in M^{2(m+n)}$ such that for $1 \leq\alpha\leq 2m$ and $2m+1 \leq \alpha \leq 2(m+n)$ then $e_\alpha \in\pounds(M_1)$ and $e_\alpha\in \pounds(M_2)$, respectively. Also, $\nabla^f$, $\nabla^M$ denote the Levi-Civita connection on $M_1\times_f M_2$ and hypersurface $M^{2(m+n)}$, respectively. Then, by applying the equation $(\ref{5.36})$ we have
\begin{eqnarray}\label{5.0}
  0&=&\tau_2(\overrightarrow{\psi})\nonumber\\
   &=& -\Delta \overrightarrow{H}- \textsf{trace} R^f(d\psi(.), H)d\psi(.)\nonumber\\
   &=&\sum _\alpha \nabla^f_{e_\alpha} \nabla^f_{e_\alpha} H - \textsf{trace} R^f(d\psi(.), H)d\psi(.),
\end{eqnarray}
such that, by applying the Guass and the Wiengarten formulas, we get
\begin{eqnarray}\label{5.4}
\sum _\alpha \nabla^f _{e_\alpha} \nabla^f_{e_\alpha} H &=&\sum _\alpha \nabla^\perp_{e_\alpha}\nabla^\perp_{e_\alpha}H - \sum _\alpha B(e_\alpha, A_He_\alpha)\nonumber\\
                                                        &&- \sum _\alpha A_{\nabla_{e_\alpha}^\perp H} e_\alpha - \sum _\alpha \nabla^M_{_\alpha} A_{H}e_\alpha ,
\end{eqnarray}
where
\begin{eqnarray}\label{5.1}
					\textsf{trace} \nabla^M _{(.)}A_H(.)&=& \sum _{\alpha} \nabla^M _{e_\alpha}(A_H e_\alpha)\nonumber=\sum _{\alpha,\beta} e_\alpha g\big(A_H e_\alpha, e_\beta\big)e_\beta \nonumber\\
     										&=&\sum _{\alpha,\beta} e_\alpha g\big(B(e_\alpha,e_\beta),H \big)e_\beta \nonumber\\
											&=& \sum _{\alpha,\beta} e_\alpha g\big(\nabla^f _{e_\beta} e_\alpha- \nabla^M _{e_\beta} e_\alpha , H\big)e_\beta \nonumber\\
											&=& \sum _{\alpha,\beta} \big \{ g\big(\nabla^f _{e_\alpha}\nabla^f _{e_\beta} e_\alpha, H \big) + g\big( \nabla^f _{e_\beta} e_\alpha, \nabla^f _{e_\alpha} H\big)\big\}e_\beta\nonumber\\
											&=& \sum _{\alpha,\beta} \big\{ g\big(\nabla^f _{e_\alpha}\nabla^f _{e_\beta} e_\alpha, H \big)\nonumber\\
                                            &&+ g\big( \nabla^M _{e_\beta} e_\alpha + B(e_\alpha, e_\beta), -A_H e_\alpha+ \nabla^\perp_{e_\alpha} H\big)\big \}e_\beta\nonumber\\
											&=& \sum _{\alpha,\beta} \big\{g\big(\nabla^f _{e_\alpha}\nabla^f _{e_\beta} e_\alpha, H\big)e_\beta+ g\big(A_{\nabla^\perp H}e_\alpha, e_\beta\big) e_\beta \big\}\nonumber \\
											&=& \sum _{\alpha,\beta} g\big(\nabla^f _{e_\alpha}\nabla^f _{e_\beta} e_\alpha, H \big) e_\beta + \textsf{trace} A_{\nabla^\perp H},
\end{eqnarray}
such that
\begin{eqnarray}\label{5.2}
\sum _{\alpha,\beta} g\big(\nabla^f _{e_\alpha}\nabla^f _{e_\beta} e_\alpha, H \big)e_\beta &=& \sum _\alpha \big \{g\big(R^f(e_\alpha, e_\beta)e_\alpha+ \nabla^f _{e_\beta}\nabla^f _{e_\alpha} e_\alpha, H \big)e_\beta \big \}\nonumber\\
                                                                                    &=& \sum _{\alpha,\beta } g\big(R^f(e_\alpha, e_\beta)e_\alpha, H\big)e_\beta \nonumber\\
                                                                                    &&+ \sum _\alpha g\big(\nabla^f _{e_\beta}(\nabla^M _{e_\alpha} e_\alpha + B(e_\alpha,e_\alpha)),H\big)e_\beta\nonumber\\
                                                                                    &=&\sum _{\alpha,\beta } g\big(R^f(e_\alpha, e_\beta)e_\alpha, H\big)e_\beta\nonumber\\
                                                                                 &&  + 2(m+n)\sum _{\beta} g\big(\nabla^f _{e_\beta} H, H\big)e_\beta\\
                                                                                    &=&\sum _{\alpha,\beta } g\big(R^f(e_\alpha, e_\beta)e_\alpha, H\big)e_\beta
                                                                                 + (m+n)\textsf{grad}|H|^2,\nonumber
\end{eqnarray}
and
\begin{eqnarray}\label{5.33}
\sum _{\alpha,\beta } g\big(R^f(e_\alpha, e_\beta)e_\alpha, H\big)e_\beta &=& \sum _{\alpha,\beta } g\big(R^f(e_\alpha, H)e_\alpha, e_\beta\big)e_\beta\nonumber\\
                                                                 &=& (\textsf{trace}R^f(d\psi(.) , H)d\psi(.))^\top,
\end{eqnarray}
so from the equations $(\ref{5.1})$, $(\ref{5.2})$ and $(\ref{5.33})$ we have
\begin{eqnarray}\label{5.8}
\textsf{trace} \nabla^M _{(.)}A_H (.)&=&(\textsf{trace}R^f((.), H)(.))^\top \nonumber\\
                           && + (m+n)\textsf{grad}|H|^2+ \textsf{trace} A_{\nabla^\perp_{(.)} H}(.),
\end{eqnarray}
now taking into account the equations $(\ref{5.0})$, $(\ref{5.4})$ and $(\ref{5.8})$ we have
\begin{eqnarray}\label{5.32}
0&=&\tau_2(\psi) = \Delta^\perp H - \textsf{trace} B((.), A_H (.)) - (\textsf{trace}R^f(d\psi(.), H)d\psi(.))^\top \nonumber\\
   &&- 2\textsf{trace} A_{\nabla^\perp_{(.)} H}(.) -(m+n)\textsf{grad}|H|^2-\textsf{trace}R^f(d\psi(.), H)d\psi(.).
\end{eqnarray}
In the remainder of the proof, in order to calculate $\textsf{trace}R^f(d\psi(.), H)d\psi(.)$ we take an appropriate orthogonal frame field $\{v_\alpha, \xi, V\}$ on $M^{2(n+m)}$, where $V, \xi$ and $\{v_\alpha\}$ are in $\pounds(M_1)$ for $1 \leq \alpha \leq 2(m-1$) and for $2m+1 \leq \alpha \leq 2(m+n)$, then $v_\alpha \in \pounds(M_2)$. Now, by applying the Proposition $\ref{5.21}$ we get
\begin{eqnarray}\label{5.9}
\textsf{trace}R^f(d\psi(.), H)d\psi(.)&=& \sum_\alpha R^f(d\psi(v_\alpha), H)d\psi(v_\alpha)\nonumber\\
							&&+R^f(d\psi(\xi), H)d\psi(\xi)+ R^f(d\psi(V), H)d\psi(V)\nonumber\\
                            &=&-\frac{c(m+1)+3m -1}{2}H+ \frac{2n}{f}\nabla_H\nabla f,
\end{eqnarray}
so 
\begin{eqnarray}\label{5.31}
\big(\textsf{trace}R^f(d\psi(.), H)d\psi(.)\big)^\top=\frac{2n}{f}(\nabla_H\nabla f)^\top,
\end{eqnarray}
 consequently, take all the equations $(\ref{5.32}), (\ref{5.9})$ and $(\ref{5.31})$ together in which splitting the normal and tangent parts then the result obtain.
\end{proof}
The following result induces immediately
\begin{cor}
 Let $\psi: M^{2(m+n)} \longrightarrow M_1 \times_f M_2$ be an isometric immersion of a hypersurface $M^{2(m+n)}$ in the warped product, $M_1 \times_f M_2$. Let $H\in \pounds(M_1)$ and $|H|=$ constant $\neq 0$, then $M^{2(m+n)}$ is a proper-biharmonic hypersurface if and only if
\begin{eqnarray*}
|B|^2H=\frac{C(m+1)+3m-1}{2}H-\frac{2n}{f}\nabla_H\nabla f.
\end{eqnarray*}
\begin{proof}
By applying the Theorem \ref{5.25} directly, the result obtain.
\end{proof}
\end{cor}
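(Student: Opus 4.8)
The plan is to derive the corollary as a direct specialization of Theorem~\ref{5.25} under the standing hypothesis that $|H|$ is a nonzero constant. First I would observe that when $|H|$ is constant, the term $(m+n)\operatorname{grad}|H|^2$ vanishes identically, since the gradient of a constant is zero. Next, since $H=|H|N$ with $|H|$ constant and $N$ a local unit normal field, the normal connection satisfies $\nabla^\perp_X H=|H|\nabla^\perp_X N$; moreover, because $N$ is unit, $g(\nabla^\perp_X N,N)=\tfrac12 X|N|^2=0$, so $\nabla^\perp_X N=0$ in a one-dimensional normal bundle. Hence $\nabla^\perp_{(.)}H=0$, which kills both the term $2\operatorname{trace}A_{\nabla^\perp_{(.)}H}(.)$ and the term $\Delta^\perp H=\sum_\alpha\nabla^\perp_{e_\alpha}\nabla^\perp_{e_\alpha}H$.

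After these cancellations, the system in Theorem~\ref{5.25} collapses: the second (tangential) equation reduces to $\tfrac{4n}{f}(\nabla_H\nabla f)^\top=0$, and more importantly the first (normal) equation reduces to
\begin{eqnarray*}
\operatorname{trace}B((.),A_H(.))-\frac{c(m+1)+3m-1}{2}H+\frac{2n}{f}(\nabla_H\nabla f)^\perp=0.
\end{eqnarray*}
Then I would rewrite $\operatorname{trace}B((.),A_H(.))$ in terms of $|B|^2$ and $H$. Using a local orthonormal frame $\{e_\alpha\}$, one has $\operatorname{trace}B((.),A_H(.))=\sum_\alpha B(e_\alpha,A_He_\alpha)$, and since the normal bundle is one-dimensional with $H=|H|N$, this equals $\big(\sum_\alpha g(A_He_\alpha,A_He_\alpha)/|H|\big)N=\big(\operatorname{trace}A_H^2/|H|\big)N$. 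Writing $A:=A_N$ so that $A_H=|H|A$, we get $\operatorname{trace}A_H^2=|H|^2\operatorname{trace}A^2=|H|^2|B|^2$, whence $\operatorname{trace}B((.),A_H(.))=|H||B|^2 N=|B|^2 H$. Substituting and combining the last two terms as $\tfrac{2n}{f}\nabla_H\nabla f$ (the full ambient covariant derivative, not just its normal part, once one notes the tangential component is separately forced to vanish by the second equation, so adding it changes nothing), I obtain exactly
\begin{eqnarray*}
|B|^2H=\frac{c(m+1)+3m-1}{2}H-\frac{2n}{f}\nabla_H\nabla f.
\end{eqnarray*}

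The only genuinely delicate point is the bookkeeping between $(\nabla_H\nabla f)^\perp$ appearing in the normal equation of Theorem~\ref{5.25} and the full $\nabla_H\nabla f$ appearing in the corollary; I would handle this by invoking the tangential equation, which under $|H|$ constant forces $(\nabla_H\nabla f)^\top=0$, so that $(\nabla_H\nabla f)^\perp=\nabla_H\nabla f$ and the two formulations agree. Everything else is the routine algebra of rewriting the trace term, which I would not belabor. For completeness I should also note the harmless typographical identification of the constant written $C(m+1)$ in the statement with $c(m+1)$ from the theorem. Thus the corollary follows immediately from Theorem~\ref{5.25}, as claimed.
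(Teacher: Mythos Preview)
Your proposal is correct and follows exactly the route the paper intends: the paper's own proof is the single sentence ``By applying the Theorem~\ref{5.25} directly, the result obtain,'' and you have simply spelled out the routine simplifications (vanishing of $\nabla^\perp H$, $\Delta^\perp H$, and $\textsf{grad}|H|^2$, the identity $\textsf{trace}\,B((.),A_H(.))=|B|^2H$, and the use of the tangential equation to replace $(\nabla_H\nabla f)^\perp$ by $\nabla_H\nabla f$) that the paper leaves implicit. There is no difference in approach, only in level of detail.
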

\begin{thm}\label{5.35}
Let $\psi: M^{2(m+n)}\longrightarrow M_1\times_f M_2$ be an isometric immersion of a hypersurface $M^{2(m+n)}$ in the warped product space $M_1\times_f M_2$. Let $H\in \pounds(M_2)$, then $M^{2(m+n)}$ is a biharmonic hypersurface if and only if
\begin{eqnarray}
&&\left\{
                 \begin{array}{ll}
                    \hbox{$\Delta^\bot H =\textsf{trace} B((.), A_H(.))+\frac{1}{f}\Delta f H +(2n-1)(\frac{|\nabla f|^2}{f^2}-1)H$
} \\
                   \hbox{$- 2\textsf{trace} A_{\nabla^\perp_{(.)} H}(.)-(m+n)\textsf{grad} |H|^2=0,$}
                 \end{array}
               \right.
\end{eqnarray}
where $A$, $B$ are the Weingarten operator and the second fundamental form of $M^{2(m+n)}$, respectively.
\end{thm}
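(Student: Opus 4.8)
The plan is to mirror the proof of Theorem \ref{5.25} step by step, the only difference being the computation of the curvature term $\textsf{trace}\,R^f(d\psi(\cdot),H)d\psi(\cdot)$ in the case $H\in\pounds(M_2)$. First I would start from the biharmonic equation $(\ref{5.36})$, written as in $(\ref{5.0})$, and apply the Gauss and Weingarten formulas exactly as in $(\ref{5.4})$. The identities $(\ref{5.1})$, $(\ref{5.2})$ and $(\ref{5.33})$ are purely formal consequences of the Gauss--Codazzi apparatus and the symmetries of $R^f$, and do not use where $H$ lies; hence equations $(\ref{5.8})$ and $(\ref{5.32})$ carry over verbatim. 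So the whole content reduces to evaluating $\textsf{trace}\,R^f(d\psi(\cdot),H)d\psi(\cdot)$ and splitting it into tangential and normal parts.

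For that computation I would pick an adapted orthonormal frame $\{v_\alpha,\xi,V\}$ on $M^{2(m+n)}$ as in the previous proof, with $\xi$ and the first block of $v_\alpha$ horizontal (in $\pounds(M_1)$) and the remaining $v_\alpha$ vertical (in $\pounds(M_2)$), together with $N=-\overline{\varphi}\,V\in\pounds(M_2)$. Then I would split the trace as $\sum_\alpha R^f(d\psi(v_\alpha),H)d\psi(v_\alpha)+R^f(d\psi(\xi),H)d\psi(\xi)+R^f(d\psi(V),H)d\psi(V)$ and use Proposition \ref{5.21} to reduce each $R^f$ term to curvature tensors of the factors plus warping corrections. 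Concretely, for $X,Y,Z$ all vertical the O'Neill-type formulas give $R^f(X,Y)Z$ as the lift of $R^2(X,Y)Z$ minus terms involving $|\nabla f|^2/f^2$; for mixed horizontal/vertical arguments one gets the Hessian terms $\frac{1}{f}(\nabla_{(\cdot)}\nabla f)$ and $\frac{1}{f}\Delta f$; and since here $H$ is vertical, the horizontal directions $\xi$ and the horizontal $v_\alpha$ contribute through the $-\frac{1}{f}\,\text{Hess}\,f(v_\alpha,v_\alpha)$-type pieces which sum to $\frac{1}{f}\Delta f\,H$ after tracing over a full horizontal frame (recalling $M_1$ is $(2m+1)$-dimensional and $\xi$ absorbs one direction), while the $2n-1$ vertical directions other than $V$ produce $(2n-1)\big(\tfrac{|\nabla f|^2}{f^2}-1\big)H$ from the $M_2$-curvature (a complex space form, so constant holomorphic sectional curvature) combined with the warping correction. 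The upshot should be
\begin{eqnarray*}
\textsf{trace}\,R^f(d\psi(\cdot),H)d\psi(\cdot)=-\frac{1}{f}\Delta f\,H-(2n-1)\Big(\frac{|\nabla f|^2}{f^2}-1\Big)H,
\end{eqnarray*}
which is entirely normal (proportional to $H$), so its tangential part vanishes.

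Finally I would substitute this into $(\ref{5.32})$: the term $-\big(\textsf{trace}\,R^f(d\psi(\cdot),H)d\psi(\cdot)\big)^\top$ drops out, and $-\textsf{trace}\,R^f(d\psi(\cdot),H)d\psi(\cdot)$ contributes $+\frac{1}{f}\Delta f\,H+(2n-1)\big(\tfrac{|\nabla f|^2}{f^2}-1\big)H$, yielding exactly the stated system after collecting $\Delta^\perp H$ on one side. The main obstacle I anticipate is bookkeeping in the curvature computation: one must be careful with the precise warped-product curvature identities (the $J$-terms from $R^2$, the interplay between $\overline{\varphi}$ and the splitting, and the fact that $H=|H|N$ with $N=-\overline{\varphi}V$), and with counting dimensions correctly so that the horizontal trace gives the clean coefficient $\frac{1}{f}\Delta f$ and the vertical trace the coefficient $2n-1$. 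Everything else is a transcription of the argument already carried out for Theorem \ref{5.25}.
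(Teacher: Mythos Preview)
Your plan is essentially the paper's own proof: it too recycles the decomposition $(\ref{5.4})$--$(\ref{5.8})$ from Theorem~\ref{5.25} verbatim, then recomputes only $\textsf{trace}\,R^f(d\psi(\cdot),H)d\psi(\cdot)$ using Proposition~\ref{5.21} and the warped-product curvature formulas, observes the result is purely normal, and reads off the system. One caution on the bookkeeping you flagged: the paper obtains $\textsf{trace}\,R^f(d\psi(\cdot),H)d\psi(\cdot)=\tfrac{1}{f}\Delta f\,H+(2n-1)\bigl(\tfrac{|\nabla f|^2}{f^2}-1\bigr)H$ with a \emph{positive} sign, and this feeds directly into the normal part of $(\ref{5.32})$ as $\Delta^\perp H=\textsf{trace}\,B(\cdot,A_H\cdot)+(\textsf{trace}\,R^f)^\perp$; your intermediate formula carries the opposite sign and is then compensated by a second sign slip in the substitution, so while you land on the right statement, the two errors should be reconciled.
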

\begin{proof}
 At first, we consider an appropriate orthogonal parallel local frame field $\{e_\alpha\}$ at $p\in M^{2(m+n)}$ in such away that for $1\leq \alpha \leq 2m+1$ and  $e_\alpha \in \pounds(M_2)$ where  $2m+2\leq \alpha \leq 2(m+n)$ then $e_\alpha \in \pounds(M_1)$ and, respectively. According to the assumption the mean curvature vector field $H= |H|N\in \pounds(M_2)$. Similarly, the computation goes through which we have done on the Theorem $\ref{5.25}$, in which
\begin{eqnarray}\label{5.10}
\Delta H &=& -\Delta^\perp H + \textsf{trace} B((.), A_H (.))\nonumber\\
         && + \textsf{trace} A_{\nabla^\perp_{(.)} H}(.)+ \textsf{trace} \nabla_{(.)}^{M} A_H(.),
\end{eqnarray}
where
\begin{eqnarray}\label{5.66}
\textsf{trace }\nabla^{M}_{(.)} A_H(.) = \textsf{trace} A_{\nabla^\perp_{(.)} H}(.)+(m+n)\textsf{grad}|H|^2.
\end{eqnarray}
Finally, to investigate the curvature tensor we take an orthogonal frame field $\{v_\alpha\}$ at $p\in M^{2(m+n)}$ where $v_\alpha \in \pounds(M_1)$ and $v_\alpha \in \pounds(M_2)$ for $1\leq \alpha \leq 2m+1$ and $2(m+1)\leq \alpha \leq 2(m+n)$, respectively. By applying the Proposition $\ref{5.21}$ we have
\begin{eqnarray}\label{5.11}
\textsf{trace}R^f(d\psi(.), H)d\psi(.)&=& \sum_\alpha R^f(v_\alpha, H)v_\alpha\nonumber\\
                            &=&\sum_\alpha \{\frac{ H^f(v_\alpha, v_\alpha)}{f}H+ R^{M_2}(v_\alpha, H)v_\alpha\} \nonumber\\
                            &&+ \frac{|\nabla f|^2}{f^2}(2n-1)H\\
                            &=&\frac{1}{f}\Delta f H- (2n-1)H+ \frac{|\nabla f|^2}{f^2}(2n-1)H,\nonumber
\end{eqnarray}
where $H^f (v_\alpha, v_\alpha)= v_\alpha(v_\alpha f)- (\nabla_{v_\alpha} v_\alpha)f$ denotes the Hessian of the warped function $f\in C^\infty (M^{2(m+n)})$ and $X, Y\in T(M^{2(m+n)})$. Then from the equations $(\ref{5.36}), (\ref{5.10}), (\ref{5.66})$ and $(\ref{5.11})$, in which to separate the normal and tangent parts we obtain the result.
\end{proof}
At follow,
\begin{cor}
There exist no proper biharmonic oriented compact CMC hypersurface, into the warped product manifold $M_1\times_f M_2$.
\end{cor}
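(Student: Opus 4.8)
The plan is to run an integration argument over the compact hypersurface, fed by the CMC specializations of Theorems \ref{5.25} and \ref{5.35}. Assume, for a contradiction, that $\psi:M^{2(m+n)}\to M_1\times_f M_2$ is a proper biharmonic, oriented, compact CMC hypersurface, and write $H=|H|N$ with $N$ a global unit normal and $|H|$ a nonzero constant. Because the normal bundle is a line bundle, $\nabla^{\perp}N=0$, hence $\nabla^{\perp}_{(.)}H=0$, and consequently $\Delta^{\perp}H=0$, $\textsf{trace}\,A_{\nabla^{\perp}_{(.)}H}(.)=0$ and $\textsf{grad}\,|H|^2=0$. Feeding this into Theorem \ref{5.35} (the case $H\in\pounds(M_2)$) annihilates every term containing a derivative of $H$; since in addition $\textsf{trace}\,B((.),A_H(.))=|B|^2H$ and the surviving terms $\frac{1}{f}\Delta f\,H$ and $(2n-1)(\frac{|\nabla f|^2}{f^2}-1)H$ are all parallel to $N$, the system reduces to the single scalar identity
\[
|B|^2=(2n-1)-\frac{\Delta f}{f}-(2n-1)\frac{|\nabla f|^2}{f^2}\qquad\text{on }M.
\]
In the case $H\in\pounds(M_1)$, Theorem \ref{5.25} collapses in the same way: its tangential part forces $(\nabla_H\nabla f)^{\top}=0$ and its normal part yields $|B|^2=\frac{c(m+1)+3m-1}{2}-\frac{2n}{f}\langle\nabla_N\nabla f,N\rangle$.

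Next I would integrate the scalar identity over $M$, which is compact without boundary, and dispose of the only second-order term via the divergence theorem applied to $\log f$: from $\Delta(\log f)=\frac{\Delta f}{f}-\frac{|\nabla f|^2}{f^2}$ and $\int_M\Delta(\log f)\,d\upsilon=0$ one gets $\int_M\frac{\Delta f}{f}\,d\upsilon=\int_M\frac{|\nabla f|^2}{f^2}\,d\upsilon$, whence
\[
\int_M|B|^2\,d\upsilon+2n\int_M\frac{|\nabla f|^2}{f^2}\,d\upsilon=(2n-1)\,\mathsf{Area}(M^{2(m+n)}).
\]
Finally, Cauchy--Schwarz for the symmetric operator $A_N$ together with $\textsf{trace}\,A_N=2(m+n)|H|$ gives the pointwise bound $|B|^2\ge 2(m+n)|H|^2>0$, so $\int_M|B|^2\,d\upsilon>0$; this, combined with the hypothesis on the warping map $f$ stated in the Introduction (which controls $\int_M|\nabla f|^2/f^2\,d\upsilon$ against $\frac{2n-1}{2n}\mathsf{Area}(M^{2(m+n)})$), makes the last identity impossible. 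The case $H\in\pounds(M_1)$ is excluded by the analogous estimate, or more directly by noting that there the fibres $\{p\}\times M_2$ are everywhere tangent to $M$, so compactness of $M$ already forces $M_2$ to be compact and one integrates the corresponding identity in the same fashion.

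I expect the delicate part to be bookkeeping rather than conceptual: correctly splitting the two biharmonic systems into normal and tangential components once $H$ is parallel (so that every auxiliary term genuinely drops out), and pinning down the global sign convention for $\Delta$ so that the divergence-theorem step delivers exactly the coefficient $2n$ in front of $\int_M|\nabla f|^2/f^2$. That coefficient is what must match the constant $\frac{2n-1}{2n}$ in the hypothesis on $f$; getting it right --- rather than $2n-2$ --- is the heart of the estimate.
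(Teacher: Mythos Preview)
Your approach is essentially the paper's: specialize Theorem \ref{5.35} to the CMC case to obtain the scalar identity $|B|^2=(2n-1)-\frac{\Delta f}{f}-(2n-1)\frac{|\nabla f|^2}{f^2}$, integrate over the compact $M$, and convert $\int_M\frac{\Delta f}{f}$ into $\int_M\frac{|\nabla f|^2}{f^2}$ (you phrase it via $\Delta(\log f)$, the paper writes the identity directly) to reach exactly the integral relation $\int_M|B|^2=-2n\int_M\frac{|\nabla f|^2}{f^2}+(2n-1)\,\mathsf{A}(M)$ with the same coefficient $2n$. The paper's proof handles only the vertical case $H\in\pounds(M_2)$ and does not include your Cauchy--Schwarz lower bound $|B|^2\ge 2(m+n)|H|^2$ or your separate discussion of $H\in\pounds(M_1)$; those are additions on your part, but the core integration argument is identical.
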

\begin{proof}
Let the mean curvature $|H|=c\neq 0$ be constant. Then its sufficient to show that the warped map $f$ satisfies
\begin{eqnarray*}
|\nabla f|^2 \leq \frac{2n-1}{2n}f^2 \mathsf{A}(M),
\end{eqnarray*}
where $\mathsf{A}$ is the area function of $M^{2(m+n)}$.

Now, the Theorem $\ref{5.35}$ yields
\begin{eqnarray}\label{5.12}
|B|^2= -\frac{1}{f}\Delta f - (2n-1)\frac{|\nabla f|^2}{f^2}+ (2n-1),\nonumber
\end{eqnarray}
where
\begin{eqnarray*}\label{5.13}
\int_M \frac{1}{f}\Delta f d\upsilon= \int_M \frac{g_f(\nabla f, \nabla f)}{f^2}d\upsilon,\nonumber
\end{eqnarray*}
beacuse $M^{2(m+n)}$ is a oriented compact manifold. So,  
\begin{eqnarray*}
0\leq \int_M |B|^2d\upsilon 
&=& - \int_M \frac{|\nabla f|^2}{f^2} d\upsilon -(2n-1)\int_M \frac{|\nabla f|^2}{f^2}d\upsilon \nonumber\\
&&+(2n-1)\mathsf{A}(M)\nonumber\\
                               &=&-2n \int_M \frac{|\nabla f|^2}{f^2}d\upsilon+(2n-1)\mathsf{A}(M),
\end{eqnarray*}
which yields $|B|^2$=0. In the other words
\begin{eqnarray*}
|\nabla f|^2 \leq \frac{2n-1}{2n}f^2 \mathsf{A}(M).
\end{eqnarray*}
\end{proof}

\begin{prop}
Let $\psi: M^{2(m+n)}\longrightarrow M_1\times_f M_2$ be an isometric immersion of a biharmonic hypersurface $M^{2(m+n)}$ in the warped product manifold $M_1\times_f M_2$.  Then the scaler curvature  of $M^{2(m+n)}$ satisfies
\begin{itemize}
  \item  
  \begin{eqnarray*}
  &s=-\frac{2n}{f}\Delta f + 2n(2n-1)\frac{|\nabla f|^2}{f^2}+ \frac{2n}{f}H^f(N, N)\nonumber\\
   &-\frac{2n}{f}\sum_{\beta=1}^{2m}H^f(e_\beta, e_\beta)- |B|^2 +4(m+n)^2|H|^2 +k,
  \end{eqnarray*}
  where, the local unit normal vector field $N$ is in $\pounds(M_1)$ and $k=\frac{(m-1)(c-1)}{2}+ \frac{(2m-1)(c(m+1)+3m-1)}{2} +4n(n+1)$.
  \item
  \begin{eqnarray*}
  &s= d-2(2n-1)\frac{\Delta f}{f}- \frac{2(n-1)^2}{f^2}|\nabla f|^2
   + (2m+2n)^2|H|^2-|B|^2,
  \end{eqnarray*}
  where, the local unit normal vector field $N$ of $M^{2(m+n)}$ is in $\pounds(M_2)$ and $d=\frac{c-1}{2}(m-n+1) + m(m+1)(c+3)+4n(n+1)+4$.
\end{itemize}
\end{prop}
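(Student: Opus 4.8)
The plan is to derive both formulas in parallel from the twice-traced Gauss equation for the scalar curvature of a hypersurface, the only structural inputs being Proposition~\ref{5.21}, the two model curvature tensors of Section~2, and the standing hypotheses that $\overline{\xi}$ be tangent to $M^{2(m+n)}$ and that the unit normal $N$ lie in $\pounds(M_1)$ (first case) or in $\pounds(M_2)$ (second case). Writing $\{e_i\}_{i=1}^{2(m+n)}$ for a local orthonormal frame of $M^{2(m+n)}$, tracing the Gauss equation twice gives
\[
s=\sum_{i,j}g_f\!\big(R^f(e_i,e_j)e_j,e_i\big)+|\textsf{trace}\,B|^{2}-|B|^{2}
 =\Big(\bar s-2\,\textsf{Ric}^f(N,N)\Big)+4(m+n)^{2}|H|^{2}-|B|^{2},
\]
where $\bar s$ and $\textsf{Ric}^f$ are the scalar and Ricci curvatures of $M_1\times_f M_2$, and where we used $|\textsf{trace}\,B|^{2}=4(m+n)^{2}|H|^{2}$ and $|A_N|^{2}=|B|^{2}$. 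Thus the whole problem reduces to evaluating $\textsf{Ric}^f(N,N)$ and the partial trace $\sum_i\textsf{Ric}^f(e_i,e_i)=\bar s-\textsf{Ric}^f(N,N)$ in the two positions of $N$.

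For $\textsf{Ric}^f(N,N)$ nothing new is needed: up to a sign it is the component along $N$ of $\textsf{trace}\,R^f(d\psi(\cdot),N)\,d\psi(\cdot)$, and this contraction was already carried out in the proofs of Theorems~\ref{5.25} and~\ref{5.35}. Reading off the normal parts of (\ref{5.9}) and (\ref{5.11}) gives, respectively,
\[
\textsf{Ric}^f(N,N)=\frac{c(m+1)+3m-1}{2}-\frac{2n}{f}H^f(N,N)\qquad(N\in\pounds(M_1)),
\]
\[
\textsf{Ric}^f(N,N)=-\frac{1}{f}\Delta f+(2n-1)\Big(1-\frac{|\nabla f|^{2}}{f^{2}}\Big)\qquad(N\in\pounds(M_2)),
\]
where in the first line $\overline{\eta}(N)=0$ because $\overline{\xi}$ is tangent. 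For the partial trace one contracts $R^f$ on a frame adapted to the splitting of $TM$ into its $\pounds(M_1)$- and $\pounds(M_2)$-parts, using Proposition~\ref{5.21} to replace $R^f$ by $R^1$, $R^2$ and Hessian-of-$f$ terms; contracting the model tensors of Section~2 yields $\textsf{Ric}^1=\frac{c(m+1)+3m-1}{2}\,g_1-\frac{(c-1)(m+1)}{2}\,\eta\otimes\eta$ and $\textsf{Ric}^2$ a constant multiple of $g_2$, which together with the warped-product identities $\textsf{Ric}^f(X,Y)=\textsf{Ric}^1(X,Y)-\frac{2n}{f}H^f(X,Y)$ for horizontal $X,Y$ and $\textsf{Ric}^f(V,W)=\textsf{Ric}^2(V,W)-\big(\frac{\Delta f}{f}+(2n-1)\frac{|\nabla f|^{2}}{f^{2}}\big)g_f(V,W)$ for vertical $V,W$ give the partial trace in closed form.

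Assembling the pieces is then bookkeeping. When $N\in\pounds(M_1)$ the adapted frame has $2m$ horizontal vectors (which with $N$ span $\pounds(M_1)$ and include $\overline{\xi}$) and $2n$ vertical ones, so the horizontal block contributes $\sum_{\beta=1}^{2m}\textsf{Ric}^1(e_\beta,e_\beta)-\frac{2n}{f}\sum_{\beta=1}^{2m}H^f(e_\beta,e_\beta)$ and the vertical block the $\textsf{Ric}^2$- and Hessian-of-$f$ terms; feeding these and $\textsf{Ric}^f(N,N)$ into the Gauss equation and absorbing the part that does not involve derivatives of $f$ into the constant $k=\frac{(m-1)(c-1)}{2}+\frac{(2m-1)(c(m+1)+3m-1)}{2}+4n(n+1)$ yields the first formula. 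When $N\in\pounds(M_2)$ the same computation, now with $2m+1$ horizontal and $2n-1$ vertical frame vectors, gives the second formula with the constant $d=\frac{c-1}{2}(m-n+1)+m(m+1)(c+3)+4n(n+1)+4$.

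The routine parts are the contraction of the model tensors and the Gauss identity itself; the genuinely delicate step --- and the place where sign slips are easiest --- is the reconciliation of the several objects one is tempted to call ``$\Delta f$'': the Laplacian of $f$ on $M_1$, its Laplacian on the hypersurface $M^{2(m+n)}$, and the trace along $TM$ of its ambient second covariant derivative. These differ precisely by the vertical-direction correction $H^f(V,V)=\frac{|\nabla f|^{2}}{f}$ for a vertical unit vector $V$, which is forced by $\mathrm{nor}(\nabla^f_V V)=-\frac{|V|^{2}}{f}\nabla f$ in Proposition~\ref{5.21}, and one must also keep straight the $g_f$- versus $g_2$-normalization of vertical unit vectors when contracting $\textsf{Ric}^2$. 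Only once all of this is tracked do the non-derivative terms collapse into exactly the constants $k$ and $d$ in the statement.
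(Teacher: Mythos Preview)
Your proposal is correct and follows essentially the same route the paper indicates: the paper's own proof consists of the single sentence ``By taking into account the Proposition~\ref{5.21} and a straightforward computation, we obtain the results, easily,'' and your argument is precisely a fleshed-out version of that computation via the traced Gauss equation together with the warped-product curvature identities of Proposition~\ref{5.21}. Your reuse of the curvature traces (\ref{5.9}) and (\ref{5.11}) already computed in Theorems~\ref{5.25} and~\ref{5.35} to read off $\textsf{Ric}^f(N,N)$ is a sensible shortcut, and your cautionary remark about the several meanings of ``$\Delta f$'' is well taken; note also that, as your argument makes clear, the biharmonicity hypothesis plays no role in the derivation of either scalar-curvature formula.
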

\begin{proof}
By taking into account the Proposition $\ref{5.21}$ and a straightforward computation, we obtain the results, easily.
\end{proof}

This work has been financially supported by Azarbaijan Shahid Madani Uniyersity
Under the grant number... .

\end{document}